\UseAllTwocells \xyoption{frame} \CompileMatrices
\newtheorem{theorem}[subsection]{Theorem}
\newtheorem{lemma}[subsection]{Lemma}
\newtheorem{proposition}[subsection]{Proposition}
\theoremstyle{definition}
\theoremstyle{remark}
\theoremstyle{remark}
\newtheorem{remark}[subsection]{Remark}
\numberwithin{equation}{section}
\newcommand{\bpsi}{\bar{\psi}}
\newcommand{\Mbar}{\overline{\M}}
\newcommand{\com}{\mathbb{C}}
\newcommand{\X}{\mathcal{X}}
\newcommand{\Y}{\mathcal{Y}}
\newcommand{\M}{\mathcal{M}}
\newcommand{\C}{\mathcal{C}}
\newcommand{\bc}{\mathbf{c}}
\newcommand{\sO}{\mathcal{O}}
\newcommand{\bE}{\mathbb{E}}
\newcommand{\Mwtilde}{\widetilde{\M}}
\def\<{\left\langle}
\def\>{\right\rangle}
\begin{document}

\title{On degree zero elliptic orbifold Gromov-Witten invariants}
\author{Hsian-Hua Tseng}
\address{Department of Mathematics\\ Ohio State University\\ 100 Math Tower, 231 West 18th Ave.\\Columbus\\ OH 43210\\ USA}
\email{hhtseng@math.ohio-state.edu}

\date{\today}

\begin{abstract}
We compute, by two methods, the genus one degree zero orbifold Gromov-Witten invariants with non-stacky insertions which are exceptional cases of the dilaton and divisor equations. One method involves a detailed analysis of the relevant moduli spaces. The other method, valid in the presence of torus actions with isolated fixed points, is virtual localization. Our computations verify the conjectural evaluations of these invariants. Some genus one twisted orbifold Gromov-Witten invariants are also computed.
\end{abstract}

\maketitle

\section{introduction}
We work over $\mathbb{C}$. Let $\X$ be a proper smooth Deligne-Mumford stack with projective coarse moduli space $X$. Let $$\<\tau_{a_1}(\gamma_1)...\tau_{a_n}(\gamma_n)\>_{g,n,d}^\X$$ denote Gromov-Witten invariants of $\X$. We refer to \cite{AGV} for the notation used here, as well as the algebraic definition of these invariants. Gromov-Witten invariants of $\X$ satisfy the so-called {\em dilaton equation} and {\em divisor equation}. Let $1\in H^0(\X)$ be the Poincar\'e dual of the fundamental class, and $\gamma\in H^2(\X)$. For classes $\gamma_1,...,\gamma_n$ in the orbifold cohomology of $\X$ and non-negative integers $a_1,...,a_n$, we have
\begin{equation}\label{DE_and_DE}
\begin{split}
\<\tau_{a_1}(\gamma_1)...\tau_{a_n}(\gamma_n)\tau_1(1)\>^\X_{g,n+1,d}=&(2g-2+n)\<\tau_{a_1}(\gamma_1)...\tau_{a_n}(\gamma_n)\>^\X_{g,n,d}\\
\<\tau_{a_1}(\gamma_1)...\tau_{a_n}(\gamma_n)\tau_0(\gamma)\>^\X_{g,n+1,d}=&\left(\int_d\gamma \right)\<\tau_{a_1}(\gamma_1)...\tau_{a_n}(\gamma_n)\>^\X_{g,n,d}\\
&+\sum_{i=1}^n\<\tau_{a_1}(\gamma_1)...\tau_{a_{i-1}}(\gamma_{i-1})\tau_{a_i-1}(\gamma_i\cup\gamma)\tau_{a_{i+1}}(\gamma_{i+1})...\tau_{a_n}(\gamma_n)\tau_1(1)\>^\X_{g,n,d}.
\end{split}
\end{equation}
A detailed discussion of these equations can be found in \cite{AGV}, Section 8.3.

Let $\Mbar_{g,n+1}(\X,d)'$ be the moduli stack\footnote{In \cite{AGV} the symbol $\mathcal{K}$ is used in place of $\Mbar$ here.} of $(n+1)$-pointed genus-$g$ degree-$d$ stable maps to $\X$ such that the last marked point is non-stacky. The proofs of the dilaton and divisor equations (\ref{DE_and_DE}) are based on the interpretation of the moduli stack $\Mbar_{g,n+1}(\X,d)'$ as the universal family over the moduli stack $\Mbar_{g,n}(\X,d)$ of  $n$-pointed genus-$g$ degree-$d$ stable maps to $\X$. More precisely, the morphism $$\Mbar_{g,n+1}(\X,d)'\to \Mbar_{g,n}(\X,d)$$ defined by forgetting the last (non-stacky) marked point gives the universal family over $\Mbar_{g,n}(\X,d)$. In genus $g=1$, such a description is invalid when $d=0$ and $n=0$ because stability is violated. Let $$\Mbar_{1,1}(\X,0)'$$ denote the moduli stack parametrizing genus $1$ degree $0$ stable maps to $\X$ with one {\em non-stacky} marked point. The virtual dimension of $\Mbar_{1,1}(\X,0)'$ is computed by Riemann-Roch:
$$\text{vdim}_\com\Mbar_{1,1}(\X,0)'=1.$$
Let $[\Mbar_{1,1}(\X,0)']^{vir}$ denote the associated virtual fundamental class. The following genus $1$ orbifold Gromov-Witten invariants arising from this moduli space are of basic interest:
\begin{equation}\label{genus1-inv}
\begin{split}
&\<\tau_1(1)\>_{1,1,0}'^{\X}:=\int_{[\Mbar_{1,1}(\X,0)']^{vir}}\bpsi_1,\\
&\<\tau_0(D)\>_{1,1,0}'^{\X}:=\int_{[\Mbar_{1,1}(\X,0)']^{vir}}ev_1^*D, \,\, D\in H^2(\X, \com).
\end{split}
\end{equation}
Here $ev_1: \Mbar_{1,1}(\X,0)'\to \X$ is the evaluation map, and $\bpsi_1$ is the descendant class. Since $\Mbar_{1,1}(\X,0)'$ cannot be interpreted as the universal family, invariants (\ref{genus1-inv}) cannot be calculated by dilaton and divisor equations. 

The main result of this paper is the evaluations of (\ref{genus1-inv}):
\begin{theorem}
\begin{equation}\label{dilaton}
\<\tau_1(1)\>_{1,1,0}'^{\X}=\frac{1}{24}\int_{II\X}c_{top}(T_{II\X});
\end{equation}

\begin{equation}\label{divisor}
\<\tau_0(D)\>_{1,1,0}'^{\X}=-\frac{1}{24}\int_{II\X}\pi_\X^*D\cup c_{top-1}(T_{II\X}).
\end{equation}
\end{theorem}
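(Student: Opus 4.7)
The plan is to identify the moduli stack $\Mbar_{1,1}(\X,0)'$ together with its virtual fundamental class, and then reduce the two integrals to the classical integrals of $\psi_1$ and of the Hodge class on $\Mbar_{1,1}$.

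\textit{Step 1: Identify the moduli.} A degree-zero genus-one stable map $f\colon(\mathcal{C},p)\to\X$ with the marked point non-stacky is constant with image some $x\in\X$, so it factors as $\mathcal{C}\to BG_x\hookrightarrow\X$; such a factorization is the data of a $G_x$-torsor on $\mathcal{C}$. Over the open locus where the coarse source is a smooth elliptic curve (and hence $\mathcal{C}$ carries no orbifold structure, since $p$ is the only special point and is non-stacky), such a torsor is determined up to isomorphism by a conjugacy class of pairs of commuting elements $(g_1,g_2)\in G_x\times G_x$, i.e.\ a point of $II\X=I(I\X)$ over $x$. This yields an identification
\[
\Mbar_{1,1}(\X,0)' \;\cong\; \Mbar_{1,1}\times II\X,
\]
with the first factor recording the elliptic source and the second recording the holonomy data.

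\textit{Step 2: Perfect obstruction theory.} At a point $[(E,p),f]$ with $f$ constant to $x$ and holonomy $(g_1,g_2)$, the sheaf $f^*T_\X$ is the flat bundle on $E$ with fiber $T_x\X$ and holonomy $\rho\colon\mathbb{Z}^2\to G_x$, $\rho(\alpha)=g_1,\ \rho(\beta)=g_2$. Since $\langle g_1,g_2\rangle$ is abelian, $f^*T_\X$ splits into flat line bundles indexed by characters of $\langle g_1,g_2\rangle$; non-trivial characters give non-trivial torsion line bundles on $E$, which have vanishing $H^0$ and $H^1$, while the trivial-character summand contributes both cohomologies. One obtains
\[
H^0(E,f^*T_\X)=(T_x\X)^{g_1,g_2}=T_{II\X,\,(x,g_1,g_2)},\qquad H^1(E,f^*T_\X)=(T_x\X)^{g_1,g_2}\otimes H^1(E,\sO_E).
\]
Hence the obstruction bundle over $\Mbar_{1,1}\times II\X$ is $T_{II\X}\boxtimes\bE^\vee$, where $\bE$ is the Hodge bundle on $\Mbar_{1,1}$, and
\[
[\Mbar_{1,1}(\X,0)']^{vir}\;=\;c_{top}\!\left(T_{II\X}\boxtimes\bE^\vee\right)\cap\bigl[\Mbar_{1,1}\times II\X\bigr].
\]

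\textit{Step 3: Evaluation.} Let $r=\mathrm{rk}\,T_{II\X}$ on a given component. Using $c_r(V\otimes L)=\sum_{i=0}^{r}c_i(V)\,c_1(L)^{r-i}$ for $V$ of rank $r$ and $L$ a line bundle, together with the fact that $\bpsi_1$ pulls back from $\psi_1$ on $\Mbar_{1,1}$ and $ev_1^*D$ pulls back from $\pi_\X^*D$ on $II\X$, dimensional reasons (since $\dim\Mbar_{1,1}=1$) force all but one term in the sum to vanish: for $\<\tau_1(1)\>$ only the $i=r$ term survives, and for $\<\tau_0(D)\>$ only the $i=r-1$ term survives. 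The remaining integrals on $\Mbar_{1,1}$ are $\int\psi_1=\tfrac{1}{24}$ and $\int c_1(\bE^\vee)=-\tfrac{1}{24}$, which reproduce exactly the right-hand sides of (\ref{dilaton}) and (\ref{divisor}).

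\textit{Main obstacle.} The substantive technical work lies in Step 1 and its interaction with Step 2 over the nodal boundary of $\Mbar_{1,1}$: one must verify that twisted curves with a balanced cyclotomic gerbe at the node fit into the product description above without introducing spurious components, and that the obstruction bundle $T_{II\X}\boxtimes\bE^\vee$ extends correctly across this stratum, so that the product calculation in Step 3 is valid globally as an identity of virtual classes. When $\X$ carries a torus action with isolated fixed points, an alternative route is virtual localization, which localizes both integrals to contributions at the fixed points and reduces the theorem to the same Hodge-integral identities on $\Mbar_{1,1}$; the equality of the two approaches provides an internal consistency check.
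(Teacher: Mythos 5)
Your overall strategy---describe the moduli space, identify the obstruction bundle as $\bE^\vee$ tensored with the tangent bundle of the double inertia stack, and reduce to $\int_{\Mbar_{1,1}}\psi_1=\int_{\Mbar_{1,1}}\lambda_1=1/24$---is the right one and matches the paper's first method. But Step 1 contains a genuine error: the claimed identification $\Mbar_{1,1}(\X,0)'\cong\Mbar_{1,1}\times II\X$ is false, and the paper is organized precisely around the fact that no such product description exists (in contrast with the variety case $\Mbar_{1,1}(X,0)\cong\Mbar_{1,1}\times X$). The pointwise statement is fine: for a \emph{fixed} smooth elliptic curve $E$ with a \emph{chosen} basis of $\pi_1(E)$, constant maps to $BG_x$ are classified by commuting pairs up to conjugation. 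What fails is the globalization in families: there is no consistent choice of basis of $H_1$ of the fibers over $\Mbar_{1,1}$, and the $SL_2(\mathbb{Z})$ monodromy of the universal curve permutes the pairs $(g_1,g_2)$ (preserving only the subgroup $\langle g_1,g_2\rangle$ up to conjugacy, not the bi-conjugacy class). Concretely, for $\X=B(\mathbb{Z}/2)$ the space $\Mbar_{1,1}\times IIB(\mathbb{Z}/2)$ has four connected components, whereas $\Mbar_{1,1}(B(\mathbb{Z}/2),0)'$ has only two: the trivial-cover component and a single \emph{connected} component that is a degree-$3$ cover of $\Mbar_{1,1}$ (the three nontrivial double covers of a given elliptic curve are interchanged by monodromy). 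So the two stacks are not isomorphic, and your Step 3 cannot literally be an integral over a product.

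The fix---and this is what the paper does---is to replace the isomorphism by a correspondence: both $\Mbar_{1,1}(\X,0)'$ and $\Mbar_{1,1}\times II\X$ admit \'etale maps $\pi_1,\pi_2$ to $\Mbar_{1,1}\times\X_2$, where $\X_2$ records a point of $\X$ together with a bicyclic subgroup of its automorphism group (the subgroup $\langle g_1,g_2\rangle$ \emph{is} monodromy-invariant, unlike the pair). One checks component-wise that $\deg\pi_1=\deg\pi_2$, that the obstruction bundle on $\Mbar_{1,1}(\X,0)'$ is $\pi_1^*(\bE^\vee\boxtimes T_{\X_2})$, and that $\pi_2^*(\bE^\vee\boxtimes T_{\X_2})=\bE^\vee\boxtimes T_{II\X}$; pushing the virtual class forward to $\Mbar_{1,1}\times\X_2$ and pulling back along $\pi_2$ then yields exactly the integrals you write in Step 3, since $\bpsi_1$ and $ev_1^*D$ also descend through this correspondence. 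Establishing these degree and obstruction-bundle statements (via admissible $G$-covers in the global-quotient case, then \'etale descent to general $\X$) is the real content of the proof, beyond the boundary issues you flag; your localization remark at the end is indeed the paper's second, independent method.
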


Here $II\X$ is the {\em double inertia stack} associated to $\X$. By definition $II\X$ is the stack of triples $(x, g_1, g_2)$ where $x\in Ob(\X)$ is an object of $\X$ and $g_1,g_2\in Aut(x)$ are two elements of the automorphism group of $x$ such that $g_1g_2=g_2g_1$. The map $$\pi_\X: II\X\to \X$$ is the natural projection defined by $(x, g_1,g_2)\mapsto x$. 

In \cite{jt}, equations\footnote{Note that different notations were used in \cite{jt}.} (\ref{dilaton})-(\ref{divisor}) were conjectured based on a heuristic analysis of $\Mbar_{1,1}(\X,0)'$. They play an important in the formulation of Virasoro constraints for orbifold Gromov-Witten theory, see \cite{jt}. They are also the first calculations of genus $1$ orbifold Gromov-Witten invariants that are valid in full generality.\footnote{It is worth noting that by the work of \cite{jk}, (\ref{dilaton}) holds true for $\X=BG$ the classifying stack of a finite group $G$, and (\ref{divisor}) is vacuous in this case.}

 For a smooth complex projective variety $X$, the calculations of analogous invariants $\<\tau_1(1)\>_{1,1,0}^X$ and $\<\tau_0(D)\>_{1,1,0}^X$ (see e.g. \cite{ge}) follows from an explicit description of the moduli stack $\Mbar_{1,1}(X,0)$ of $1$-pointed genus $1$ degree $0$ stable maps to $X$ as a product $\Mbar_{1,1}\times X$ of the moduli stack $\Mbar_{1,1}$ of $1$-pointed  genus $1$ stable curves and $X$. One can then deduce from this description an explicit formula for the virtual fundamental class needed for the calculations.

However for a Deligne-Mumford stack $\X$ the moduli stack $\Mbar_{1,1}(\X,0)'$ no longer admits such a simple description. As a result the calculations of $\<\tau_1(1)\>_{1,1,0}'^{\X}$ and $\<\tau_0(D)\>_{1,1,0}'^{\X}$ are harder. In this paper we study (\ref{dilaton})-(\ref{divisor}) by two methods. The first method is based on an analysis of the moduli stack $\Mbar_{1,1}(\X, 0)'$. The main idea is to compare $\Mbar_{1,1}(\X, 0)'$ with the product $\Mbar_{1,1}\times II\X$, where $\Mbar_{1,1}$ is the moduli stack of $1$-pointed genus $1$ stable curves. We show that there are two natural maps $\pi_1, \pi_2$ fitting into a diagram
\begin{equation*}
\xymatrix{
\Mbar_{1,1}(\X, 0)'\ar[dr]_{\pi_1}& {} & {\Mbar_{1,1}\times II\X}\ar[ld]^{\pi_2}\\
{} & {\Mbar_{1,1}\times \X_2.} &{}
}
\end{equation*}
Here $\X_2$ denotes the stack of pairs $(x, H)$ where $x\in Ob(\X)$ is an object of $\X$ and $H\subset Aut(x)$ is an abelian subgroup with at most two generators. We show that $\pi_1$ and $\pi_2$ are \'etale of the same degree, and we explicitly describe the obstruction bundle of $\Mbar_{1,1}(\X, 0)'$ as the pull-back by $\pi_1$ of a vector bundle on $\Mbar_{1,1}\times \X_2$. These are the main ingredients used to establish (\ref{dilaton})-(\ref{divisor}). We first establish these properties for the moduli stack $\Mbar_{1,1}(\X, 0)'$ in case when $\X=[M/G]$ is a global quotient by a finite group; see Section \ref{section:global_quotient_case}. The theory of admissible covers \cite{acv} plays an important role in this analysis. Then we use the fact that every Deligne-Mumford stack is locally a quotient by a finite group to extend our analysis to general $\X$; see Section \ref{section:general_case}. 

The second method, valid for stacks $\X$ admitting an algebraic torus action with isolated fixed points, is to compute (\ref{dilaton})-(\ref{divisor}) by the virtual localization formula \cite{gp}; see Section \ref{section:localization_approach}. 

Our analysis also allows use to do some related calculations of some genus $1$ {\em twisted} orbifold Gromov-Witten invariants \cite{t}. This is explained in Section \ref{section:twisted_invariants}.

\subsection*{Acknowledgment}
The author thanks D. Abramovich, K. Behrend, T. Graber, M. Olsson, and A. Vistoli for valuable discussions, and the referee for suggestions and pointing out a number of errors in the previous version of this paper. Part of this work was done during a visit to Mathematical Sciences Research Institute in Spring 2009. The author is grateful for their hospitality and support. In addition, the author is supported in part by NSF grant DMS-0757722.

\section{Global quotient case}\label{section:global_quotient_case}
In this Section we consider the case when the target $\X=[M/G]$ is a global qoutient of a smooth variety $M$ by a finite group $G$. 

\subsection{Moduli stacks}
Let $\X=[M/G]$ be a global qoutient of a smooth (not necessarily proper) scheme $M$ by a finite group $G$. Let $S$ be a scheme, and let $$[f: (\C/S, p)\to \X]\in \Mbar_{1,1}(\X, 0)'$$ be an $S$-valued point of $\Mbar_{1,1}(\X, 0)'$. Here $p: S\to \C$ is the marked section, whose image is contained in the non-singular non-stacky locus of $\C$. Let $(C, \bar{p})$ denote the coarse curve of $(\C, p)$.

Let $M\to \X=[M/G]$ be the natural atlas for $\X$. Set $$D:= \C\times_{f, [M/G]} M.$$ We obtain the following diagram
$$\begin{CD}
D@> >> M\\
@V{}VV @V{}VV\\
(\C, p)@>f >>[M/G]\\
@V{}VV\\
(C, \bar{p}).
\end{CD}$$
It is known that the composite $$D\to \C\to C$$ is an {\em admissible $G$-cover} in the sense of \cite{acv}, and the map $D\to M$ is $G$-equivariant and of degree $0$. Since the marked section $p: S\to \C$ is non-stacky, the admissible $G$-cover $D\to C$ is unramified over $\bar{p}$. Given the admissible $G$-cover $D\to C$, we may recover the twisted curve $\C$ as the stack quotient $\C=[D/G]$. Moreover, the data $f: (\C, p)\to [M/G]$ is equivalent to the data of an admissible $G$-cover $D\to (C, \bar{p})$ unramified over $\bar{p}$ and a $G$-equivariant map $D\to M$. 

Let $\Mwtilde_{1,1}([M/G],0)'$ be the moduli space whose $S$-valued points are diagrams 
\begin{equation}\label{object_in_Mwtilde}
\begin{CD}
(D, p')@> >>M\\
@V{}VV\\
(C/S, \bar{p}),
\end{CD}
\end{equation}
such that
\begin{enumerate}
\item
$(C, \bar{p})$ is a connected $1$-pointed genus $1$ stable curve over $S$;
\item
$D\to C$ is an $S$-family of admissible $G$-covers unramified over $\bar{p}(S)$;
\item
$D\to M$ is $G$-equivariant and of degree $0$;
\item
$p': S\to D$ is a section of $D/S$ such that the composite $S\overset{p'}{\longrightarrow}D\to C$ is the map $\bar{p}$.
\end{enumerate}
By the discussion above, $S$-valued points of $\Mbar_{1,1}([M/G],0)'$ can be identified as diagrams (\ref{object_in_Mwtilde}) without the section $p'$. Therefore forgetting $p'$ yields a morphism 
\begin{equation}\label{cover_with_section}
\phi: \Mwtilde_{1,1}([M/G], 0)'\to \Mbar_{1,1}([M/G], 0)'.
\end{equation}
By assumption the cover $D\to C$ is unramified over $\bar{p}$, so the group $G$ acts freely on the fiber of $D\to C$ over $\bar{p}$. Hence $G$ acts on $\Mwtilde_{1,1}([M/G], 0)'$ freely by permuting the choices of the section $p'$. The map (\ref{cover_with_section}) is $G$-equivariant with respect to the trivial $G$-action on $\Mbar_{1,1}([M/G], 0)'$, and it induces 
\begin{equation}\label{Mbar_vs_Mwtilde}
\Mwtilde_{1,1}([M/G], 0)'/G=\Mbar_{1,1}([M/G], 0)'.
\end{equation} 
In particular (\ref{cover_with_section}) is \'etale of degree $|G|$. 

Our next goal is to describe the moduli space $\Mwtilde_{1,1}([M/G], 0)'$. Consider an $S$-valued point (\ref{object_in_Mwtilde}) of $\Mwtilde_{1,1}([M/G], 0)'$. Let 
\begin{equation}\label{stein_factorization}
D\to \widetilde{S}\to S
\end{equation}
be the Stein factorization of $D\to S$. By definition of Stein factorization, the map $D\to \widetilde{S}$ has connected fibers, and the map $\widetilde{S}\to S$ is finite. Since $p': S\to D$ is a section of $D/S$, the composite $p'': S\overset{p'}{\longrightarrow}D\to \widetilde{S}$ is an isomorphism onto its image. The pull-back 
$$\begin{CD}
\widetilde{D}@> >> D\\
@V{}VV @V{}VV\\
S@>p''>>\widetilde{S}
\end{CD}$$
yields an $S$-family $\widetilde{D}\to S$ of {\em connected curves}. The natural map $\widetilde{D}\to D\to C$ is an $S$-family of connected admissible $H$-covers for some subgroup $H$ of $G$. The section $p': S\to D$ defines a section $\tilde{p}': S\to \widetilde{D}$. The composite $\widetilde{D}\to D\to M$ yields an $H$-equivariant map of degree $0$.

Composing the section $\tilde{p}': S\to \widetilde{D}$ with the degree $0$ map $\widetilde{D}\to M$ yields an $S$-valued point $S\to M$ of $M$. By $H$-equivariance, this is in fact an $S$-valued point $S\to M^H$ of the $H$-fixed locus $M^H$. 

Since $(\widetilde{D}, \tilde{p}')\to (C, \bar{p})$ is a connected pointed subcover of $(D, p')\to (C, \bar{p})$, the subgroup $H$ is the subgroup of $G$ generated by monodromies of the pointed cover $(D_t, p'(t)) \to (C_t, \bar{p}(t))$, where $t\in S$. If $C_t$ is smooth, then this is just the image of the natural map $\mathbb{Z}\oplus \mathbb{Z}\simeq \pi_1(C_t, \bar{p}(t))\to G$. If $C_t$ is singular, then by stability $C_t$ has one node. In this case the monodromy arises in two ways: from the fundamental group $\pi_1(C_t, \bar{p}(t))\simeq \mathbb{Z}$;  and from vanishing cycles (i.e. non-trivial elements in $\pi_1(C_t\setminus \{\text{node}\}, \bar{p}(t))$). The monodromy can also be understood as follows. One can see that $\widetilde{D}\to \C_t:=[\widetilde{D}/H]$ is a connected principal $H$-bundle over the {\em twisted curve} $\C_t$ whose coarse curve is $C_t$. The group $H$ is the image of the natural map $\pi_1^{orb}(\C_t, \bar{p}(t))\to G$. The twisted curve $\C_t$ has one possibly stacky node whose stabilizer group is of order $m\in \mathbb{N}$, hence its orbifold fundamental group $\pi_1^{orb}(\C_t, \bar{p}(t))$ is isomorphic to $\mathbb{Z}\oplus \mathbb{Z}_m$. Thus $H$ is also abelian. Therefore in either case $H$ is an abelian subgroup of $G$ with at most two generators. We call such a group {\em bicyclic}.

Conversely, suppose that $\widetilde{D}\to C$ is an $S$-family of connected admissible $H$-covers with a section of $\widetilde{D}/S$ for some bicyclic subgroup $H$ of $G$. Then $$D:=\widetilde{D}\times^{H}G$$ yields an $S$-family of admissible $G$-covers $D\to C$ with a section of $D/S$. Given an $H$-equivariant degree $0$ map $\widetilde{D}\to M^H$ this also yields a $G$-equivariant degree $0$ map $D\to M$. This yields an object of the form (\ref{object_in_Mwtilde}).

We next analyze automorphisms. An automorphism of the object (\ref{object_in_Mwtilde}) is a $G$-equivariant isomorphism $D\to D$ fixing $p'$ such that 
$$\begin{CD}
D@> >>M\\
@V{}VV @V{||}VV\\
D@> >>M
\end{CD}$$
is commutative. Since the map $D\to D$ must fix the section $p'$, we see that it induces an $H$-equivarinat isomorphism $\widetilde{D}\to\widetilde{D}$ fixing the section $\tilde{p}'$ such that 
$$\begin{CD}
\widetilde{D}@> >>M^H\\
@V{}VV @V{||}VV\\
\widetilde{D}@> >>M^H
\end{CD}$$
is commutative.

Conversely, an $H$-equivariant isomorphism $\widetilde{D}\to \widetilde{D}$ fixing $\tilde{p}'$ and commuting with $\widetilde{D}\to M^H$ uniquely yields a $G$-equivariant isomorphism $D\to D$ fixing $p'$ and commuting with $D\to M$. Here $D:=\widetilde{D}\times^{H}G$, $p'$ and the map $D\to M$ are constructed as above.

Let $\Mwtilde_{1,1}[H]^{conn}$ be the moduli stack whose $S$-valued points are 
\begin{equation}\label{object_in_Mconn}
(\widetilde{D}, \tilde{p}')\to (C, \bar{p})\to S,
\end{equation}
such that
\begin{enumerate}
\item
$(C, \bar{p})$ is a connected $1$-pointed genus $1$ stable curve;
\item
$\widetilde{D}\to C$ is an $S$-family of connected admissible $H$-covers unramified over $\bar{p}$; 
\item
$\tilde{p}': S\to \widetilde{D}$ is a section such that the composite $S\overset{\tilde{p}'}{\longrightarrow} \widetilde{D}\to C$ is the map $\bar{p}$.
\end{enumerate}
Then the discussion above proves the following 
\begin{proposition}\label{identification_of_covering_space}
There is an isomorphism of stacks
$$\Mwtilde_{1,1}([M/G], 0)'\simeq \coprod_{H\subset G \text{ bicyclic}}(\Mwtilde_{1,1}[H]^{conn}\times M^H).$$
\end{proposition}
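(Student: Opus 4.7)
The plan is to construct an isomorphism of stacks by giving mutually inverse morphisms in both directions, along the lines already sketched in the discussion preceding the proposition.

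For the forward direction, start with an $S$-valued point of $\Mwtilde_{1,1}([M/G],0)'$, namely a diagram $(D,p')\to(C,\bar{p})$ together with the $G$-equivariant degree $0$ map $D\to M$. I would apply Stein factorization $D\to \widetilde{S}\to S$ as in \eqref{stein_factorization}; since $p'$ is a section, the composite $S\xrightarrow{p'} D\to\widetilde{S}$ is an isomorphism onto its image, and pulling back $D$ along $p''$ singles out the connected component $\widetilde{D}$ containing the section. Let $H\subset G$ be the subgroup stabilizing $\widetilde{D}$. Then $(\widetilde{D},\tilde{p}')\to(C,\bar{p})$ is a connected admissible $H$-cover (an object of $\Mwtilde_{1,1}[H]^{conn}$), and composing $\tilde{p}'$ with the $H$-equivariant degree $0$ map $\widetilde{D}\to M$ yields a map $S\to M^H$, since any element of $H$ fixes $p'(S)$ and hence its image in $M$.

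The key step here is to verify that $H$ is bicyclic, which I would do fiberwise and then by a connectedness argument. For a geometric point $t\in S$, the group $H$ is the image of $\pi_1^{orb}(\C_t,\bar{p}(t))\to G$ where $\C_t=[\widetilde{D}_t/H]$ is the twisted curve. A $1$-pointed genus $1$ twisted curve with at worst one (possibly stacky) node has orbifold fundamental group isomorphic to either $\mathbb{Z}\oplus\mathbb{Z}$ (smooth case) or $\mathbb{Z}\oplus\mathbb{Z}_m$ (nodal case with stabilizer of order $m$), so its image $H$ is automatically abelian with at most two generators. Since $S$ is connected on each component, $H$ is locally constant in $S$, giving the decomposition indexed by bicyclic subgroups.

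For the backward direction, from $(\widetilde{D},\tilde{p}')\to(C,\bar{p})$ in $\Mwtilde_{1,1}[H]^{conn}$ and a map $q:S\to M^H$, I would form the induced $G$-cover $D:=\widetilde{D}\times^H G$. The $H$-equivariant degree $0$ map $\widetilde{D}\to M$ obtained by composing the projection $\widetilde{D}\to S$ with $q:S\to M^H\hookrightarrow M$ extends uniquely via induction to a $G$-equivariant degree $0$ map $D\to M$, and $p'$ is defined as the image of $\tilde{p}'$ under the canonical map $\widetilde{D}\hookrightarrow D$ corresponding to the identity coset. One checks that the two constructions are mutually inverse on objects: the Stein factorization of the induced cover $\widetilde{D}\times^H G$ recovers $\widetilde{D}$ as the component through the chosen section, and the evaluation $\tilde{p}'\mapsto q$ agrees with the restriction of the original map.

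Finally, I would upgrade this to an equivalence of stacks by matching morphisms, using the automorphism analysis already recorded in the excerpt: a $G$-equivariant automorphism of $D$ fixing $p'$ and commuting with $D\to M$ restricts to an $H$-equivariant automorphism of $\widetilde{D}$ fixing $\tilde{p}'$ and commuting with $\widetilde{D}\to M^H$, and conversely any such restriction extends uniquely via the induced-cover construction. This gives full faithfulness; essential surjectivity follows from the constructions above. The main obstacle I anticipate is ensuring that all constructions (Stein factorization, induction $\widetilde{D}\mapsto\widetilde{D}\times^H G$, restriction of the target from $M$ to $M^H$) behave well in families — in particular that $\widetilde{D}\to S$ remains flat and that the locally constant index $H$ glues consistently — but since $D\to S$ is finite flat and the section $p'$ is always contained in a single component, these compatibilities are straightforward.
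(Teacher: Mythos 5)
Your proof follows the same route as the paper: Stein factorization to extract the connected component of $D$ through the section, the orbifold--fundamental-group/monodromy analysis (smooth vs.\ one-nodal twisted curve, giving $\mathbb{Z}\oplus\mathbb{Z}$ or $\mathbb{Z}\oplus\mathbb{Z}_m$) to see that $H$ is bicyclic, induction $D=\widetilde{D}\times^H G$ for the inverse construction, and the automorphism matching to upgrade the bijection on objects to an equivalence of stacks. One justification is wrong as stated, although the conclusion it supports is correct: you claim the composite $S\xrightarrow{\tilde{p}'}\widetilde{D}\to M$ lands in $M^H$ ``since any element of $H$ fixes $p'(S)$.'' It does not: the cover is unramified over $\bar{p}$, so $H$ acts \emph{freely} on the fiber of $\widetilde{D}\to C$ over $\bar{p}$ and in particular does not fix $\tilde{p}'(S)$ inside $\widetilde{D}$ --- this freeness is precisely what makes the $G$-action permuting the sections free and gives $\Mwtilde_{1,1}([M/G],0)'/G=\Mbar_{1,1}([M/G],0)'$. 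The correct reason is the degree-$0$ condition: each connected fiber $\widetilde{D}_t$ is contracted to a single point of $M$, and since $H$ preserves $\widetilde{D}_t$ and $\widetilde{D}\to M$ is $H$-equivariant, that image point must be fixed by all of $H$, hence lies in $M^H$. With that one sentence repaired, your argument coincides with the paper's.
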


\subsection{Obstruction theory}\label{section:obs_theory}
Consider the universal family 
$$\begin{CD}
\C@> f>> [M/G]\\
@V{\pi}VV\\
\Mbar_{1,1}([M/G], 0)'.
\end{CD}$$
Let $\mathfrak{M}_{1,1}^{tw}$ denote the stack of $1$-pointed genus $1$ twisted curves. It is known (see \cite{AGV}) that the stack $\Mbar_{1,1}([M/G], 0)'$ admits a perfect obstruction theory relative to the morphism $$\Mbar_{1,1}([M/G], 0)'\to \mathfrak{M}_{1,1}^{tw},$$ defined by forgetting the stable map. The obstruction theory is given by $R^\bullet\pi_*f^*T_{[M/G]}$. Since the map $\phi$ in (\ref{cover_with_section}) is \'etale, the pull-back $\phi^*R^\bullet\pi_*f^*T_{[M/G]}$ is a relative perfect obstruction theory on $\Mwtilde_{1,1}([M/G], 0)'$. Let $[\Mwtilde_{1,1}(\X,0)']^{vir}$ denote the associated virtual fundamental class. Then clearly we have $|G|[\Mbar_{1,1}(\X,0)']^{vir}=\phi_*[\Mwtilde_{1,1}(\X,0)']^{vir}$.

Let $\hat{\pi}:\widetilde{D}\to \Mwtilde_{1,1}[H]^{conn}$ denote the universal admissible $H$-cover over $\Mwtilde_{1,1}[H]^{conn}$. Then the following diagram
$$\begin{CD}
 \coprod_{H\subset G \text{ bicyclic}}(\widetilde{D}\times M^H)@> \tilde{f}>>M\\
@V{\tilde{\pi}=\hat{\pi}\times \text{id}}VV\\
\coprod_{H\subset G \text{ bicyclic}} (\Mwtilde_{1,1}[H]^{conn}\times M^H)
\end{CD}$$
is the universal family over $\coprod_{H\subset G \text{ bicyclic}} (\Mwtilde_{1,1}[H]^{conn}\times M^H)$. Here the map $\tilde{f}$ is obtained by projection to the second factor, together with the inclusion $M^H\subset M$. From the proof of the equivalence Proposition \ref{identification_of_covering_space} it follows that on the component $\Mwtilde_{1,1}[H]^{conn}\times M^H$ we have 
\begin{equation*}
\begin{split}
&\quad\phi^*R^1\pi_*f^*T_{[M/G]}|_{\Mwtilde_{1,1}[H]^{conn}\times M^H}\\
&=(R^1\tilde{\pi}_*\tilde{f}^*T_M)^H\\
&=(R^1\hat{\pi}_*\sO_{\widetilde{D}})^H\boxtimes T_{M^H}\\
&=\mathbb{E}^\vee\boxtimes T_{M^H}.
\end{split}
\end{equation*}
Here $\mathbb{E}$ is the pull-back of the Hodge bundle over $\Mbar_{1,1}$ via the natural map $$\Mwtilde_{1,1}[H]^{conn}\to \Mbar_{1,1}$$ which forgets the cover. It follows that the obstruction sheaf $\phi^*R^1\pi_*f^*T_{[M/G]}$ is locally free. 

\subsection{Computation}
In this Subsection we verify (\ref{dilaton})-(\ref{divisor}) by direct computations in the case $\X=[M/G]$, where we assume that $M$ is a smooth {\em projective} variety. Under this assumption the moduli space $\Mbar_{1,1}([M/G], 0)'$ is proper. Let $[\Mbar_{1,1}([M/G], 0)']^{vir}$ denote the virtual fundamental class associated to the obstruction theory discussed above.

Since the obstruction sheaf $\phi^*R^1\pi_*f^*T_{[M/G]}$ is locally free, it follows from e.g. \cite{ge}, Propostion 2.5, that the virtual fundamental class $[\Mwtilde_{1,1}([M/G],0)']^{vir}$ on $\Mwtilde_{1,1}([M/G], 0)'$ is given by 
\begin{equation}\label{vir_class_formula}
[\Mwtilde_{1,1}([M/G],0)']^{vir}=\bigoplus_{H\subset G \text{ bicyclic}} c_{top}(\mathbb{E}^\vee\boxtimes T_{M^H})\cap \left([\Mwtilde_{1,1}[H]^{conn}]\times [M^H]\right). 
\end{equation}

We now proceed to compute (\ref{genus1-inv}). First we have 
\begin{equation}\label{dilaton_calc_1}
\begin{split}
\<\tau_1(1)\>_{1,1,0}'^{\X}&=\int_{[\Mbar_{1,1}(\X,0)']^{vir}}\bpsi_1\\
&=\frac{1}{|G|}\int_{[\Mwtilde_{1,1}(\X,0)']^{vir}}\bpsi_1, \quad \text{ because $\phi$ is \'etale of degree } |G|\\
&=\frac{1}{|G|}\sum_{H\subset G \text{ bicyclic}}\int_{\Mwtilde_{1,1}[H]^{conn}}\bpsi_1\int_{M^H}c_{top}(T_{M^H})\quad \text{ by }(\ref{vir_class_formula}).
\end{split}
\end{equation}
We have abused notation by denoting the pull-back of $\psi_1\in H^2(\Mbar_{1,1}, \mathbb{Q})$ to {\em any} of these moduli spaces by $\bpsi_1$. 

Let $\Mbar_{1,1}[H]^{conn}$ be the moduli stack whose $S$-valued points are (\ref{object_in_Mconn}) without the section $\tilde{p}'$. Clearly, forgetting the section $\tilde{p}'$ yields a morphism $$\Mwtilde_{1,1}[H]^{conn}\to\Mbar_{1,1}[H]^{conn},$$ which is \'etale of degree $|H|$. Let $$\Mbar_{1,1}[H]^{conn}\to \Mbar_{1,1}$$ be the map defined by forgetting the admissible $H$-covers. The degree of this map is 
\begin{equation}\label{degree_forgetting_cover_map}
\frac{1}{|H|}\#\{g_1, g_2\in H| \<g_1, g_2\>=H, g_1g_2=g_2g_1\},
\end{equation}
see e.g. \cite{jk}. Therefore we have 
\begin{equation}\label{dilaton_calc_2}
\int_{\Mwtilde_{1,1}[H]^{conn}}\bpsi_1=|H|\int_{\Mbar_{1,1}[H]^{conn}}\bpsi_1=\#\{g_1, g_2\in H| \<g_1, g_2\>=H, g_1g_2=g_2g_1\}\int_{\Mbar_{1,1}}\psi_1.
\end{equation}
Notice that 
\begin{equation}\label{dilaton_calc_3}
\begin{split}
&\frac{1}{|G|}\sum_{H\subset G \text{ bicyclic}}\#\{g_1, g_2\in H| \<g_1, g_2\>=H, g_1g_2=g_2g_1\}\int_{M^H}c_{top}(T_{M^H})\\
=&\frac{1}{|G|}\int_{\coprod_{g_1, g_2\in G; g_1g_2=g_2g_1}M^{\<g_1,g_2\>}\times \{(g_1, g_2)\}}c_{top}(T_{M^{\<g_1,g_2\>}})\\
=&\int_{[(\coprod_{g_1, g_2\in G; g_1g_2=g_2g_1}M^{\<g_1,g_2\>}\times \{(g_1, g_2)\})/G]}c_{top}(T_{[(\coprod_{g_1, g_2\in G; g_1g_2=g_2g_1}M^{\<g_1,g_2\>}\times \{(g_1, g_2)\})/G]})\\
=&\int_{II[M/G]}c_{top}(T_{II[M/G]}),
\end{split}
\end{equation}
where in the last step we used the description of the double inertia stack as the following quotient
\begin{equation}\label{local_description_double_inertia}
II[M/G]=\left[\left(\coprod_{g_1, g_2\in G; g_1g_2=g_2g_1}M^{\<g_1,g_2\>}\times \{(g_1, g_2)\}\right)/G\right],
\end{equation}
Here the $G$-action on $\coprod_{g_1, g_2\in G; g_1g_2=g_2g_1}M^{\<g_1,g_2\>}\times \{(g_1, g_2)\}$ is given as follows: an element $g\in G$ sends a point $m\in M^{\<g_1,g_2\>}$ to $g\cdot m\in M^{\< gg_1g^{-1}, gg_2g^{-1}\>}$. This description can be found in e.g. \cite{toen}, the paragraph after the proof of Corollaire 3.46. 

Combining (\ref{dilaton_calc_1}), (\ref{dilaton_calc_2}), (\ref{dilaton_calc_3}), and the fact that $\int_{\Mbar_{1,1}}\psi_1=1/24$, we arrive at $$\<\tau_1(1)\>_{1,1,0}'^{\X}=\frac{1}{24}\int_{II[M/G]}c_{top}(T_{II[M/G]}),$$ which is (\ref{dilaton}) in this case.

Next we compute $\<\tau_0(D)\>_{1,1,0}'^{\X}$. 
\begin{equation}\label{divisor_calc_1}
\begin{split}
\<\tau_0(D)\>_{1,1,0}'^{\X}&=\int_{[\Mbar_{1,1}(\X,0)']^{vir}}ev_1^*D\\
&=\frac{1}{|G|}\int_{[\Mwtilde_{1,1}(\X,0)']^{vir}}ev_1^*D\\
&=\frac{1}{|G|}\sum_{H\subset G \text{ bicyclic}}\int_{\Mwtilde_{1,1}[H]^{conn}}(-\bpsi_1)\int_{M^H}D\cup c_{top-1}(T_{M^H})\quad \text{ by }(\ref{vir_class_formula}).
\end{split}
\end{equation}
Here we also abuse notation by denoting various pull-backs of $D\in H^2(\X,\mathbb{C})$ still by $D$. In the last equality of (\ref{divisor_calc_1}) we used the standard fact $c_1(\mathbb{E})=\psi_1$ on $\Mbar_{1,1}$. Similar to (\ref{dilaton_calc_3}) we have 
\begin{equation}\label{divisor_calc_3}
\begin{split}
&\frac{1}{|G|}\sum_{H\subset G \text{ bicyclic}}\#\{g_1, g_2\in H| \<g_1, g_2\>=H, g_1g_2=g_2g_1\}\int_{M^H}D\cup c_{top-1}(T_{M^H})\\
=&\int_{II[M/G]}D\cup c_{top-1}(T_{II[M/G]}).
\end{split}
\end{equation}
Combining (\ref{divisor_calc_1}), (\ref{dilaton_calc_2}), (\ref{divisor_calc_3}) and $\int_{\Mbar_{1,1}}\psi_1=1/24$, we arrive at $$\<\tau_0(D)\>_{1,1,0}'^{\X}=-\frac{1}{24}\int_{II[M/G]}D\cup c_{top-1}(T_{II[M/G]}),$$ which is (\ref{divisor}) in this case.

\subsection{Alternative formulation}\label{sec:alternative_approach}
In this Subsection we reformulate the ingredients used in the computation above. This reformulation is necessary for the study of the general case.

Let $\X=[M/G]$ be a global quotient stack with $G$ a finite group and $M$ a smooth {\em not necessarily projective} scheme. Consider the following composite
\begin{equation}\label{map_from_Mwtilde}
\coprod_{H\subset G \text{ bicyclic}}(\Mwtilde_{1,1}[H]^{conn}\times M^H)\to \coprod_{H\subset G \text{ bicyclic}}(\Mbar_{1,1}[H]^{conn}\times M^H)\to \Mbar_{1,1}\times \coprod_{H\subset G \text{ bicyclic}}M^H.
\end{equation}
Here the first map is given by the map $\Mwtilde_{1,1}[H]^{conn}\to\Mbar_{1,1}[H]^{conn}$ which forgets the section $\tilde{p}'$. This morphism is \'etale of degree $|H|$. The second map is given by the map $\Mbar_{1,1}[H]^{conn}\to \Mbar_{1,1}$, which forgets the covers. As mentioned above, its degree is (\ref{degree_forgetting_cover_map}). By Proposition \ref{identification_of_covering_space} this gives a morphism 
\begin{equation}\label{pre-pi_1}
\Mwtilde_{1,1}([M/G], 0)'\to \Mbar_{1,1}\times \coprod_{H\subset G \text{ bicyclic}}M^H.
\end{equation}
As mentioned above, there is a free $G$ action on $\Mwtilde_{1,1}([M/G], 0)'$ induced from the free $G$ action on the fiber over the marked point. The $G$-action on $M$ yields a $G$-action on $\coprod_{H\subset G \text{ bicyclic}}M^H$ as follows: an element $g\in G$ sends a point $m\in M^H$ to $g\cdot m\in M^{gHg^{-1}}$. It is straightforward to check that (\ref{pre-pi_1}) is $G$-equivariant with respect to these actions. By (\ref{Mbar_vs_Mwtilde}), this yields a morphism
\begin{equation}\label{pi_1}
\pi_1:\Mbar_{1,1}([M/G],0)'\to \Mbar_{1,1}\times \left[\left(\coprod_{H\subset G \text{ bicyclic}}M^H\right)/G\right],
\end{equation}
whose degree over each component is $\#\{g_1, g_2\in H| \<g_1, g_2\>=H, g_1g_2=g_2g_1\}$.

For a bicyclic subgroup $H\subset G$, there is an \'etale map $$\coprod_{g_1,g_2\in G, \<g_1, g_2\>=H, g_1g_2=g_2g_1} M^{\<g_1, g_2\>}\times \{(g_1, g_2)\}\to M^H,$$ which is of degree $\#\{g_1, g_2\in H| \<g_1, g_2\>=H, g_1g_2=g_2g_1\}$. Putting all these maps together gives a map $$\coprod_{g_1,g_2\in G, g_1g_2=g_2g_1} M^{\<g_1, g_2\>}\times \{(g_1, g_2)\}\to \coprod_{H\subset G\text{ bicyclic}}M^H,$$
which by (\ref{local_description_double_inertia}) induces a map 
\begin{equation}\label{pi_2}
\pi_2: \Mbar_{1,1}\times II[M/G] \to \Mbar_{1,1}\times \left[\left(\coprod_{H\subset G \text{ bicyclic}}M^H\right)/G\right].
\end{equation}
Here the $G$-action on $\coprod_{H\subset G \text{ bicyclic}}M^H$ is the same one used in (\ref{pi_1}).

Observe that by the discussion above, component-wise $\pi_1$ and $\pi_2$ have the same degree.

The discussion in Section \ref{section:obs_theory} shows that the obstruction sheaf on $\Mwtilde_{1,1}([M/G],0)'$ is the pull-back of the sheaf $\mathbb{E}^\vee\boxtimes T_{M^H}$ from $\Mbar_{1,1}\times \coprod_{H\subset G \text{ bicyclic}}M^H$ via (\ref{map_from_Mwtilde}). Hence the obstruction sheaf on $\Mbar_{1,1}([M/G],0)'$ is the pull-back of the sheaf $\mathbb{E}^\vee\boxtimes T_{[(\coprod_{H\subset G \text{ bicyclic}}M^H)/G]}$ from $\Mbar_{1,1}\times [(\coprod_{H\subset G \text{ bicyclic}}M^H)/G]$ via $\pi_1$. 

Suppose now that $M$ is projective. Then the moduli spaces are proper. We check (\ref{dilaton})-(\ref{divisor}) using this reformulation. By \cite{ge}, Propostion 2.5, we have the following equality of virtual fundamental classes,
\begin{equation*}
[\Mbar_{1,1}([M/G], 0)']^{vir}=c_{top}(\pi_1^*(\mathbb{E}^\vee\boxtimes T_{[(\coprod_{H\subset G \text{ bicyclic}}M^H)/G]}))\cap[\Mbar_{1,1}([M/G],0)'].
\end{equation*}
Applying $\pi_{1*}$ and using the fact that $\text{deg}\, \pi_1=\text{deg}\, \pi_2$, we obtain\footnote{Strictly speaking this computation is done component-wise.}
\begin{equation*}
\begin{split}
\pi_{1*}[\Mbar_{1,1}([M/G], 0)']^{vir}&=(\text{deg}\, \pi_1)c_{top}(\mathbb{E}^\vee\boxtimes T_{[(\coprod_{H\subset G \text{ bicyclic}}M^H)/G]})\cap \left([\Mbar_{1,1}]\times [[(\coprod_{H\subset G \text{ bicyclic}}M^H)/G]]\right)\\
&=(\text{deg}\, \pi_2)c_{top}(\mathbb{E}^\vee\boxtimes T_{[(\coprod_{H\subset G \text{ bicyclic}}M^H)/G]})\cap \left([\Mbar_{1,1}]\times [[(\coprod_{H\subset G \text{ bicyclic}}M^H)/G]]\right)\\
&=\pi_{2*}(c_{top}(\pi_2^*(\mathbb{E}^\vee\boxtimes T_{[(\coprod_{H\subset G \text{ bicyclic}}M^H)/G]}))\cap ([\Mbar_{1,1}]\times [II[M/G]]))\\
&=\pi_{2*}(c_{top}(\mathbb{E}^\vee\boxtimes T_{II[M/G]})\cap ([\Mbar_{1,1}]\times [II[M/G]])).
\end{split}
\end{equation*}

It follows that 
\begin{equation*}
\begin{split}
\<\tau_1(1)\>_{1,1,0}'^{\X}&=\int_{[\Mbar_{1,1}([M/G],0)']^{vir}}\bpsi_1\\
&=\int_{\pi_{1*}[\Mbar_{1,1}([M/G],0)']^{vir}}\bpsi_1\\
&=\int_{\pi_{2*}(c_{top}(\mathbb{E}^\vee\boxtimes T_{II[M/G]})\cap [\Mbar_{1,1}]\times [II[M/G]])}\bpsi_1\\
&=\int_{[\Mbar_{1,1}]\times [II[M/G]]}\bpsi_1\cup c_{top}(\mathbb{E}^\vee\boxtimes T_{II[M/G]}),
\end{split}
\end{equation*}
and 
\begin{equation*}
\begin{split}
\<\tau_0(D)\>_{1,1,0}'^{\X}&=\int_{[\Mbar_{1,1}([M/G],0)']^{vir}}ev_1^*D\\
&=\int_{\pi_{1*}[\Mbar_{1,1}([M/G],0)']^{vir}}ev_1^*D\\
&=\int_{\pi_{2*}(c_{top}(\mathbb{E}^\vee\boxtimes T_{II[M/G]})\cap [\Mbar_{1,1}]\times [II[M/G]])}ev_1^*D\\
&=\int_{[\Mbar_{1,1}]\times [II[M/G]]}ev_1^*D\cup c_{top}(\mathbb{E}^\vee\boxtimes T_{II[M/G]}),
\end{split}
\end{equation*}
from which (\ref{dilaton}) and (\ref{divisor}) follow easily.

\section{General case}\label{section:general_case}
In this Section we discuss the calculation for general $\X$. Let $\X$ be a smooth proper Deligne-Mumford stack with projective coarse moduli space $X$. We begin with a lemma.

\begin{lemma}\label{etale_atlas_of_Mbar}
Let $\Y\to \X$ be an \'etale map. Then the induced map $\Mbar_{1,1}(\Y, 0)'\to \Mbar_{1,1}(\X, 0)'$ is \'etale.
\end{lemma}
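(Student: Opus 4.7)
The plan is to verify the infinitesimal lifting criterion for \'etale morphisms of Deligne--Mumford stacks. Let $S\hookrightarrow S'$ be a square-zero thickening of affine schemes, and suppose we are given a commutative diagram of solid arrows
\begin{equation*}
\xymatrix{
S\ar[r]\ar[d] & \Mbar_{1,1}(\Y,0)'\ar[d] \\
S'\ar[r]\ar@{-->}[ur] & \Mbar_{1,1}(\X,0)'.
}
\end{equation*}
By the moduli interpretation, the bottom row furnishes a $1$-pointed genus-$1$ twisted curve $(\C'/S',p')$ with non-stacky marked section, together with a representable stable map $f':\C'\to\X$ of degree $0$; the top row gives a lift of $f'|_{\C}:\C\to\X$ to a representable map $\tilde{f}:\C\to\Y$, where $\C:=\C'\times_{S'}S$. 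Producing the dashed arrow amounts to producing a unique lift $\tilde{f}':\C'\to\Y$ of $\tilde{f}$ that composes with $\Y\to\X$ to recover $f'$.

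The main step is to invoke formal \'etaleness of the map $g:\Y\to\X$. The inclusion $\C\hookrightarrow\C'$ is a square-zero thickening of Deligne--Mumford stacks (its ideal is the pull-back of the ideal of $S\hookrightarrow S'$ along the flat morphism $\C'\to S'$). Since $g:\Y\to\X$ is \'etale and therefore formally \'etale, and since this formal \'etaleness descends through a smooth atlas of $\C'$ to apply to thickenings of Deligne--Mumford stacks, there is a unique lift $\tilde{f}':\C'\to\Y$ of $\tilde{f}$ fitting into
\begin{equation*}
\xymatrix{
\C\ar[r]^{\tilde{f}}\ar[d] & \Y\ar[d]^{g}\\
\C'\ar[r]^{f'}\ar@{-->}[ur]^{\tilde{f}'} & \X.
}
\end{equation*}

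It remains to verify that $\tilde{f}'$ is a stable map to $\Y$, in the sense required by $\Mbar_{1,1}(\Y,0)'$. The underlying curve $(\C',p')$ and its degree-$0$ structure are unchanged, so stability, genus, and non-stackiness of $p'$ are inherited from the $\X$-side. For representability of $\tilde{f}'$: at any geometric point $c$ of $\C'$, the composition $\mathrm{Aut}_{\C'}(c)\to\mathrm{Aut}_\Y(\tilde{f}'(c))\to\mathrm{Aut}_\X(f'(c))$ equals the map induced by $f'$, which is injective by representability of $f'$; hence the first arrow is injective, and $\tilde{f}'$ is representable.

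Combining these observations, the induced morphism $\Mbar_{1,1}(\Y,0)'\to\Mbar_{1,1}(\X,0)'$ is formally \'etale. Since both moduli stacks are locally of finite presentation (being Deligne--Mumford moduli stacks of stable maps), the morphism is \'etale. The main subtlety, and the step to execute with care, is the descent of formal \'etaleness of $g$ along a smooth cover to cover the case of thickenings of twisted curves rather than of schemes; everything else is formal.
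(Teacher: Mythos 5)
Your proof is correct and follows essentially the same route as the paper: both verify the formal (infinitesimal) lifting criterion by reducing the problem to lifting the stable map $\C'\to\X$ along the square-zero thickening $\C\hookrightarrow\C'$ of twisted curves and then invoking formal \'etaleness of $\Y\to\X$. The extra checks you include (representability and stability of the lifted map, and the remark about descending formal \'etaleness through an atlas of the twisted curve) are points the paper leaves implicit, but they do not change the argument.
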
 
\begin{proof}
We use the formal criterion for \'etaleness. Let $0\to I\to A\to B\to 0$ be a square zero extension. Consider a commutative diagram 
\begin{equation}\label{extension_diagram}
\begin{CD}
S_0=\text{Spec}\, B@> >>\Mbar_{1,1}(\Y, 0)'\\
@V{}VV @V{}VV\\
S=\text{Spec}\, A@> >>\Mbar_{1,1}(\X,0)'.
\end{CD}
\end{equation}
We need to prove the existence of a lifting $\text{Spec}\, A\to \Mbar_{1,1}(\Y, 0)'$. Diagram (\ref{extension_diagram}) is equivalent to the following commutative diagram 
\begin{equation*}
\xymatrix{
& & {\Y}\ar[d]\\
{\C_0}\ar[r]\ar[urr]\ar[d]&{\C}\ar[r]\ar[d] & {\X}\\
S_0\ar@/^/[u]^{\sigma_0}\ar[r]& S\ar@/^/[u]^{\sigma}.
}
\end{equation*}
We may rearrange this as 
\begin{equation*}
\xymatrix{
S_0\ar[r]^{\sigma_0}\ar[d]& {\C_0}\ar[d]\ar[r]& {\Y}\ar[d]\\
S\ar[r]^{\sigma}& {\C}\ar@{-->}[ur]\ar[r] &{\X}.
}
\end{equation*}
Since $\Y\to \X$ is \'etale, by formal criterion for \'etaleness there exists a unique lifting $\C\to \Y$. The family $(\C/S\to \Y, S\overset{\sigma}{\rightarrow}\C\to \Y)$ provides the needed lifting $\text{Spec}\, A\to \Mbar_{1,1}(\Y,0)'$.
\end{proof}

By \cite{av}, Lemma 2.2.3, we may find an \'etale cover $\coprod_i \X_i\to \X$ of $\X$ such that each $\X_i$ is of the form $\X_i=[M_i/G_i]$ for some smooth scheme $M_i$ and finite group $G_i$. Moreover, as can be seen from the proof of \cite{av}, Lemma 2.2.3, the groups $G_i$ are stabilizer groups of objects of the stack $\X$. Applying Lemma \ref{etale_atlas_of_Mbar} to this \'etale cover $\coprod_i \X_i\to \X$ we obtain an \'etale cover 
\begin{equation}\label{etale_cover_of_Mbar11}
\coprod_i \Mbar_{1,1}(\X_i,0)'\to \Mbar_{1,1}(\X,0)'
\end{equation}
of $\Mbar_{1,1}(\X,0)'$. It is easy to see that $\Mbar_{1,1}(\X_i,0)'\times_{\Mbar_{1,1}(\X,0)'}\Mbar_{1,1}(\X_j, 0)'\simeq \Mbar_{1,1}(\X_i\times_\X\X_j, 0)'$. Indeed if $((\C_i/S, p_i)\to \X_i, (\C_j/S, p_j)\to \X_j)$ is an object of $\Mbar_{1,1}(\X_i,0)'\times_{\Mbar_{1,1}(\X,0)'}\Mbar_{1,1}(\X_j, 0)'$, then $(\C_i/S, p_i)\to \X_i\to \X$ and $(\C_j/S,p_j)\to \X_j\to \X$ are isomorphic stable maps, which gives a stable map $(\C_i/S, p_i)\simeq (\C_j/S, p_j)\to \X_i\times_\X\X_j$. The converse is clear.

Let $\X_2$ denote the stack of pairs $(x, H)$ where $x\in Ob(\X)$ is an object of $x$ and $H\subset Aut(x)$ is a bicyclic subgroup of $Aut(x)$. Forgetting the bicyclic subgroups yields a natural map 
\begin{equation}\label{bicyclic_inertia_to_stack}
\pi_{\X_2}:\X_2\to \X.
\end{equation}
The collection $$\coprod_i [(\coprod_{H\subset G_i \text{ bicyclic}}M_i^H)/G_i]$$ forms an \'etale cover of $\X_2$.

The collection of maps $\pi_1$ in (\ref{pi_1}) for various $i$ induces a map 
\begin{equation}\label{pi_1_global}
\pi_1: \Mbar_{1,1}(\X, 0)'\to \Mbar_{1,1}\times \X_2.
\end{equation}
The collection of maps $\pi_2$ in (\ref{pi_2}) for various $i$ induces a map 
\begin{equation}\label{pi_2_global}
\pi_2: \Mbar_{1,1}\times II\X\to \Mbar_{1,1}\times \X_2.
\end{equation}
We observe that component-wise $\text{deg}\,\pi_1=\text{deg}\,\pi_2$ since they are the same on each \'etale chart, by the discussion in Section \ref{sec:alternative_approach}.

By the discussion in Section \ref{section:obs_theory}, the obstruction sheaf on each \'etale chart $\Mbar_{1,1}(\X_i, 0)'$ is the pull-back of the sheaf $\mathbb{E}^\vee\boxtimes T_{[(\coprod_{H\subset G_i \text{ bicyclic}}M^H)/G_i]}$ via $\pi_1$. It follows that the obstruction sheaf on $\Mbar_{1,1}(\X, 0)'$ is the pull-back of the sheaf $\mathbb{E}^\vee\boxtimes T_{\X_2}$. Also observe that $\pi_2^*(\mathbb{E}^\vee\boxtimes T_{\X_2})=\mathbb{E}^\vee\boxtimes T_{II\X}$ because this holds on each \'etale chart. 

Applying \cite{ge}, Propostion 2.5 gives the following equation for virtual fundamental classes:
\begin{equation}\label{virtual_class_formula_global}
[\Mbar_{1,1}(\X,0)']^{vir}=c_{top}(\pi_1^*(\mathbb{E}^\vee\boxtimes T_{\X_2}))\cap [\Mbar_{1,1}(\X,0)'].
\end{equation}

We can now proceed in a way similar to Section \ref{sec:alternative_approach}. 
\begin{equation*}
\begin{split}
\pi_{1*}[\Mbar_{1,1}(\X, 0)']^{vir}&=(\text{deg}\, \pi_1)c_{top}(\mathbb{E}^\vee\boxtimes T_{\X_2})\cap ([\Mbar_{1,1}]\times [\X_2])\\
&=(\text{deg}\, \pi_2)c_{top}(\mathbb{E}^\vee\boxtimes T_{\X_2})\cap ([\Mbar_{1,1}]\times [\X_2])\\
&=\pi_{2*}(c_{top}(\pi_2^*(\mathbb{E}^\vee\boxtimes T_{\X_2}))\cap ([\Mbar_{1,1}]\times [II\X]))\\
&=\pi_{2*}(c_{top}(\mathbb{E}^\vee\boxtimes T_{II\X})\cap ([\Mbar_{1,1}]\times [II\X])).
\end{split}
\end{equation*}

It follows that 
\begin{equation*}
\begin{split}
\<\tau_1(1)\>_{1,1,0}'^{\X}&=\int_{[\Mbar_{1,1}(\X,0)']^{vir}}\bpsi_1\\
&=\int_{\pi_{1*}[\Mbar_{1,1}(\X,0)']^{vir}}\bpsi_1\\
&=\int_{\pi_{2*}(c_{top}(\mathbb{E}^\vee\boxtimes T_{II\X})\cap [\Mbar_{1,1}]\times [II\X])}\bpsi_1\\
&=\int_{[\Mbar_{1,1}]\times [II\X]}\bpsi_1\cup c_{top}(\mathbb{E}^\vee\boxtimes T_{II\X}),
\end{split}
\end{equation*}
and 
\begin{equation*}
\begin{split}
\<\tau_0(D)\>_{1,1,0}'^{\X}&=\int_{[\Mbar_{1,1}(\X,0)']^{vir}}ev_1^*D\\
&=\int_{\pi_{1*}[\Mbar_{1,1}(\X,0)']^{vir}}ev_1^*D\\
&=\int_{\pi_{2*}(c_{top}(\mathbb{E}^\vee\boxtimes T_{II\X})\cap [\Mbar_{1,1}]\times [II\X])}ev_1^*D\\
&=\int_{[\Mbar_{1,1}]\times [II\X]}ev_1^*D\cup c_{top}(\mathbb{E}^\vee\boxtimes T_{II\X}),
\end{split}
\end{equation*}
from which (\ref{dilaton}) and (\ref{divisor}) follow easily.

\section{Localization approach}\label{section:localization_approach}
Let $T=(\com)^r$ be an algebraic torus. Assume that $\X$ admits a $T$-action with isolated fixed points. The virtual localization formula \cite{gp} expresses Gromov-Witten invariants of $\X$ as a sum of contributions from fixed loci. In this section we apply virtual localization\footnote{The hypothesis in the proof of localization formula in \cite{gp} is verified for moduli stacks of stable maps to Deligne-Mumford stacks by the work of \cite{agot}.} to verify (\ref{dilaton})-(\ref{divisor}).

\subsection{Fixed loci analysis}
Let $$[f: \C\to \X]\in \Mbar_{1,1}(\X,0)'^T$$ be a $T$-fixed stable map. Then the image $f(\C)$ must be a $T$-fixed point $p\simeq BG\in \X^T$. The locus $\Mbar_p\subset \Mbar_{1,1}(\X,0)'^T$ parametrizing $T$-fixed stable maps with images $p$ is thus identified with moduli stack of maps to $BG$,
$$\Mbar_p\simeq \Mbar_{1,1}(BG)'.$$
The fixed substack $\Mbar_{1,1}(\X,0)'$ is a union of $\Mbar_p$ over fixed points $p$,
$$\Mbar_{1,1}(\X,0)'=\cup_{p\in \X^T}\Mbar_p.$$
Clearly $\Mbar_p\simeq \Mbar_{1,1}(BG)'$ is smooth, and the virtual class induced from the $T$-fixed obstruction theory coincides with the fundamental class $[\Mbar_{1,1}(BG)']$. A dimension count shows that the virtual normal bundle has virtual rank $0$. Its Euler class may be described as follows. For a stable map $[f:\C\to \X]\in \Mbar_p$, let $\rho_f: \pi_1^{orb}(\C)\to G$ denote its {\em monodromy representation}. The vector space $H^0(\C, f^*T\X)$ of invariant sections is identified with the subspace $$(T_p\X)^{\rho_f}\subset T_p\X$$
which consists of vectors fixed by the monodromy representation $\rho_f$. Furthermore there is a $\rho_f$-equivariant splitting into fixed part and ``moving'' part $$T_p\X=(T_p\X)^{\rho_f}\oplus (T_p\X)^{mov}.$$ 
It follows that 
\begin{equation*}
\begin{split}
H^1(\C, f^*T\X)&\simeq H^0(\C, \omega_\C\otimes (f^*T\X)^\vee)^\vee\\
 &\simeq H^0(\C,\omega_\C\otimes (f^*(T_p\X)^{\rho_f})^\vee)^\vee\oplus H^0(\C, \omega_\C\otimes (f^*(T_p\X)^{mov})^\vee)^\vee\\
&=H^0(\C, \omega_\C)^\vee\otimes (T_p\X)^{\rho_f} \quad (\text{since } (T_p\X)^{mov} \text{ is not }\rho_f \text{ fixed})\\
&=\bE^\vee\otimes (T_p\X)^{\rho_f}.
\end{split}
\end{equation*}
Here $\bE$ is the pull back of the  Hodge bundle of $\Mbar_{1,1}$. As $[f]$ varies the spaces $(T_p\X)^{\rho_f}$ form a vector bundle $V$, which is trivial on components of $\Mbar_p$. Therefore the $T$-equivariant {\em inverse} Euler class of the virtual normal bundle is 
\begin{equation}\label{vir-normal}
\frac{c_{top}(\bE^\vee\otimes V)}{c_{top}(V)}=1-\bpsi_1\frac{c_{top-1}(V)}{c_{top}(V)}.
\end{equation}

\subsection{Localization on double inertia stack}
The $T$-action on $\X$ canonically lifts to a $T$-action on $II\X$, consequently the map $\pi_\X: II\X\to \X$ is $T$-equivariant. The $T$-fixed locus $II\X^T$ is decomposed according to the image under $\pi$:
$$II\X^T=\cup_{p\simeq BG\in \X^T}\pi_\X^{-1}(p), \quad \pi_\X^{-1}(p)\simeq IIBG.$$
Moreover, the normal bundle of $IIBG\subset II\X$ coincides with the restriction of $TII\X$. By the Atiyah-Bott localization formula, an integral over $II\X$ may be expressed as a sum of contributions from each $\pi_\X^{-1}(p)$.

\subsection{Dilaton}
(\ref{dilaton})-(\ref{divisor}) will be proven by equating contributions from $\Mbar_p$ to the left-hand side with contributions from $\pi_\X^{-1}(p)$ to the right-hand side.  

By (\ref{vir-normal}), the contribution to $\<\tau_1(1)\>_{1,1,0}'^{\X}$ from the fixed locus $\Mbar_p$ is 
\begin{equation}\label{dilaton-contribution}
\begin{split}
&\int_{\Mbar_{1,1}(BG)'}\bpsi_1\left(1-\bpsi_1\frac{c_{top-1}(V)}{c_{top}(V)}\right)\\
=&\int_{\Mbar_{1,1}(BG)'}\bpsi_1\\
=&\text{deg}(\Mbar_{1,1}(BG)'\to \Mbar_{1,1})\int_{\Mbar_{1,1}}\psi_1\\
=&\frac{1}{24}\frac{\#\{g,h\in G|gh=hg\}}{|G|},
\end{split}
\end{equation}
where the last equality uses the degree calculation in \cite{jk}, Proposition 2.1.

The contribution from $\pi_\X^{-1}(p)$ to the right-hand side of (\ref{dilaton}) is 
\begin{equation}\label{dilaton-rhs}
\frac{1}{24}\int_{\pi_\X^{-1}(p)}\frac{c_{top}(T_{II\X}|_{\pi_\X^{-1}(p)})}{c_{top}(T_{II\X}|_{\pi_\X^{-1}(p)})}
=\frac{1}{24}\int_{IIBG}1.
\end{equation}
In order to evaluation this integral, a description of the double inertia stack $IIBG$ is needed. 

Consider the subset 
$$A:=\{(g,h)\in G\times G|gh=hg\}\subset G\times G$$
of pairs of commuting elements of $G$. Let $G$ act on $A$ by simultaneous conjugation,
$$k\cdot (g,h):= (kgk^{-1}, khk^{-1}), \quad k\in G, (g,h)\in A.$$
An orbit of this $G$-action is called a {\em bi-conjugacy class} of $G$. For $(g,h)\in A$, the bi-conjugacy 
class containing $(g,h)$ will also be denoted by $(g,h)$.

For $(g,h)\in A$ let $C(g,h)\subset G$ denote the centralizer subgroup of $g, h$. If $(g,h)$ and $(g', h')$ belong to the same bi-conjugacy class, then $C(g,h)$ and $C(g',h')$ are conjugate to each other. In particular they have the same order. Clearly the $G$-action on $A$ has stabilizer at $(g,h)$ equal to $C(g,h)$. Thus the size of this bi-conjugacy class $(g,h)$ is $$\frac{|G|}{|C(g,h)|}.$$

It follows from the definition of double inertia stacks that $$IIBG=\coprod_{(g,h): \text{bi-conjugacy class}} BC(g,h).$$
This implies 
\begin{equation*}
\begin{split}
&\int_{IIBG}1=\sum_{(g,h): \text{bi-conjugacy class}}\int_{BC(g,h)}1\\
=&\sum_{(g,h): \text{bi-conjugacy class}}\frac{1}{|C(g,h)|}=\frac{1}{|G|}\sum_{(g,h): \text{bi-conjugacy class}}\frac{|G|}{|C(g,h)|}\\
=&\frac{|A|}{|G|}=\frac{\#\{g,h\in G|gh=hg\}}{|G|}.
\end{split}
\end{equation*}
This shows that (\ref{dilaton-rhs}) is equal to (\ref{dilaton-contribution}), as desired.

\subsection{Divisor}
An equivariant lift of the class $D$ needs be chosen. By abuse of notation, this lift will also be denoted by $D$. 

By (\ref{vir-normal}), the contribution to $\<\tau_0(D)\>_{1,1,0}'^{\X}$ from the fixed locus $\Mbar_p$ is 
\begin{equation}\label{divisor-contribution1}
\begin{split}
&\int_{\Mbar_{1,1}(BG)'}ev_1^*D\left(1-\bpsi_1\frac{c_{top-1}(V)}{c_{top}(V)}\right)\\
=&-\int_{\Mbar_{1,1}(BG)'}ev_1^*D\left(\bpsi_1\frac{c_{top-1}(V)}{c_{top}(V)}\right)\\
\end{split}
\end{equation}
(\ref{divisor-contribution1}) may be written as a sum of integrals over components of $\Mbar_{1,1}(BG)'$, as follows. Note that $\Mbar_{1,1}(BG)'$ is smooth, hence irreducible components do not intersect. Since the monodromy representation associated to a stable map is discrete, it is constant on irreducible components. Therefore irreducible components of $\Mbar_{1,1}(BG)'$ are indexed by orbits of the adjoint action of $G$ on the monodromy representations $\pi_1^{orb}(\C)\to G$, where $[\C]\in \Mbar_{1,1}$ is a generic point. To describe the orbits we may choose an identification\footnote{The particular indexing of the components depends on this choice.} $\pi_1^{orb}(\C)\simeq \mathbb{Z}\oplus \mathbb{Z}$. It follows that $\Mbar_{1,1}(BG)'$ is a disjoint union indexed by bi-conjugacy classes of $G$. Let $$\M(g,h)\subset \Mbar_{1,1}(BG)'$$ denote the component indexed by the bi-conjugacy class $(g,h)$. Given $[f:\C\to BG]\in \M(g,h)$, the image of the monodromy representation $\rho_f: \pi_1^{orb}(\C)\to G$ is the subgroup $\<g,h\>$ of $G$ generated by $g, h$. Thus the vector bundle $V|_{\M(g,h)}$ is trivial, with fiber $(T_p\X)^{\<g,h\>}$. Let $\lambda_i, 1\leq i\leq d(g,h)$ be its $T$-weights. Therefore (\ref{divisor-contribution1}) is equal to 
\begin{equation}\label{divisor-contribution2}
\begin{split}
&-\sum_{(g,h): \text{bi-conjugacy class of }G}\left(D|_p\sum_{1\leq i\leq d(g,h)}\frac{1}{\lambda_i}\right)\int_{\M(g,h)}\bpsi_1\\
=&-\sum_{(g,h): \text{bi-conjugacy class of }G}\left(D|_p\sum_{1\leq i\leq d(g,h)}\frac{1}{\lambda_i}\right)\cdot \text{deg}(\M(g,h)\to \Mbar_{1,1})\cdot \int_{\Mbar_{1,1}}\psi_1\\
=&-\frac{1}{24}\sum_{(g,h): \text{bi-conjugacy class of }G}\left(D|_p\sum_{1\leq i\leq d(g,h)}\frac{1}{\lambda_i}\right)\cdot \frac{1}{|C(g,h)|}.
\end{split}
\end{equation}
By the description of $IIBG$ given above, the contribution from $\pi_\X^{-1}(p)$ to the right-hand side of (\ref{dilaton}) is 
\begin{equation*}
\begin{split}
&-\frac{1}{24}\sum_{(g,h): \text{bi-conjugacy class of }G}\int_{BC(g,h)}D|_p\frac{c_{top-1}(T_{II\X}|_{BC(g,h)})}{c_{top}(T_{II\X}|_{BC(g,h)})}\\
=&-\frac{1}{24}\sum_{(g,h): \text{bi-conjugacy class of }G}\left(D|_p\sum_{1\leq i\leq d(g,h)}\frac{1}{\lambda_i}\right)\cdot \frac{1}{|C(g,h)|} \quad (\text{since }T_{II\X}|_{BC(g,h)}=(T_p\X)^{\<g,h\>}).
\end{split}
\end{equation*}
Clearly this agrees with (\ref{divisor-contribution2}).

\section{Twisted invariants}\label{section:twisted_invariants}
In this Section we discuss how the methods in previous sections can be applied to compute certain genus $1$ twisted orbifold Gromov-Witten invariants. Given the additional data of a complex vector bundle $F\to \X$ and an invertible multiplicative characteristic class $$\bc(\cdot):=\exp\left(\sum_{j\geq 0}s_j ch_j(\cdot)\right),$$
one can define the so-called {\em twisted orbifold Gromov-Witten invariants}. The construction and properties of twisted invariants were studied in \cite{t}. The following integral enters in the differential equations that fully determine twisted invariants in terms of usual Gromov-Witten invariants (see \cite{t}):
$$\int_{[\Mbar_{1,1}(\X,0)']^{vir}}(ev_1^*ch(F)Td^\vee(L_1))_{k+1} \bc(F_{1,1,0}).$$
Here $F_{1,1,0}$ is a $K$-theory class given by 
$$F_{1,1,0}:=R^\bullet\pi_*f^*F,$$
where the maps $\pi$ and $f$ appear in the universal family
$$\begin{CD}
\C@> f>> \X\\
@V{\pi}VV\\
\Mbar_{1,1}(\X, 0)'.
\end{CD}$$
Also, $L_1$ is the universal cotangent line bundle over $\Mbar_{1,1}(\X,0)'$ associated to the marked point. The symbol $ch(-)$ denotes the Chern character, $Td^\vee(-)$ denotes the dual Todd class, and $(-)_{k+1}$ indicates the degree-$2(k+1)$ component.

\begin{proposition}\label{tw-orb-gw}
The following equality holds in either equivariant\footnote{Here we do not assume that the torus action on $\X$ has isolated fixed points.} or non-equivariant Gromov-Witten theory:
\begin{equation}\label{twisted-inv}
\begin{split}
&\int_{[\Mbar_{1,1}(\X,0)']^{vir}}(ev_1^*ch(F)Td^\vee(L_1))_{k+1} \bc(F_{1,1,0})\\
=&-\frac{1}{48}\int_{II\X}ch_k(\pi_\X^*F)c_{top}(TII\X)+\frac{1}{24}\int_{II\X}ch_{k+1}(\pi_\X^*F)(\sum_j s_j ch_{j-1}((\pi_\X^*F)^{inv}))c_{top}(TII\X)\\
&+\frac{1}{24}\int_{II\X}ch_{k+1}(\pi_\X^*F)c_{top}(TII\X).
\end{split}
\end{equation}
\end{proposition}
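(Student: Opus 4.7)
The plan is to run the same $\pi_1$-pushforward/$\pi_2$-pullback reduction used for (\ref{dilaton}) and (\ref{divisor}) in Section \ref{section:general_case}, now applied to the twisted integrand. The extra work lies in describing $F_{1,1,0}$ as a pullback class and then unfolding the exponential twist $\bc$.

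The first task is a direct local computation of $F_{1,1,0}$. For a stable map $[f\colon\C\to\X]$ lying in the component of $\Mbar_{1,1}(\X,0)'$ indexed, in the local picture of Section \ref{section:global_quotient_case}, by a bicyclic subgroup $H\subset\mathrm{Aut}(x)$, the restriction $f^*F$ splits as an $H$-representation into its fixed and moving parts. A standard argument on genus-one twisted curves, using Serre duality and the fact that $\omega_\C$ has trivial $H$-action on its global sections, gives $R^0\pi_*f^*F=F|_x^H$ and $R^1\pi_*f^*F=\bE^\vee\otimes F|_x^H$, while the moving part contributes to neither direct image. Thus $F_{1,1,0}$ is the $\pi_1$-pullback of a K-theory class on $\Mbar_{1,1}\times\X_2$ whose further $\pi_2$-pullback to $\Mbar_{1,1}\times II\X$ equals $(1-\bE^\vee)\otimes(\pi_\X^*F)^{\mathrm{inv}}$.

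Since $ev_1$ factors through $\pi_1$ and $L_1$ is the pullback of the $\psi$-class from $\Mbar_{1,1}$, the whole integrand is itself a $\pi_1$-pullback, so the identity
$$\pi_{1*}[\Mbar_{1,1}(\X,0)']^{vir}=\pi_{2*}\bigl(c_{top}(\bE^\vee\boxtimes T_{II\X})\cap([\Mbar_{1,1}]\times[II\X])\bigr)$$
from the proof of the main theorem recasts the left-hand side of (\ref{twisted-inv}) as
$$\int_{\Mbar_{1,1}\times II\X}(ch(\pi_\X^*F)\,Td^\vee(L_1))_{k+1}\,\bc\bigl((1-\bE^\vee)(\pi_\X^*F)^{\mathrm{inv}}\bigr)\,c_{top}(\bE^\vee\boxtimes T_{II\X}).$$
The remainder is a direct expansion in $\psi_1$. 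Using $c_1(\bE)=\psi_1$ and $\psi_1^2=0$ in $H^*(\Mbar_{1,1},\mathbb{Q})$, each factor reduces to its $\psi_1$-linear part (writing $V=(\pi_\X^*F)^{\mathrm{inv}}$):
\begin{align*}
Td^\vee(L_1)&=1-\tfrac{1}{2}\psi_1,\\
\bc\bigl((1-\bE^\vee)V\bigr)&=1+\psi_1\sum_{j}s_j\,ch_{j-1}(V),\\
c_{top}(\bE^\vee\boxtimes T_{II\X})&=c_{top}(T_{II\X})-\psi_1\,c_{top-1}(T_{II\X}).
\end{align*}
Multiplying out and extracting the terms linear in $\psi_1$, which are the only ones that pair non-trivially with $[\Mbar_{1,1}]$ since $\int_{\Mbar_{1,1}}\psi_1=1/24$, one assembles the three integrals on the right-hand side of (\ref{twisted-inv}).

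The main obstacle is this final bookkeeping: combining the $\psi_1$-linear contributions coming from $Td^\vee(L_1)$, from $\bc(F_{1,1,0})$, and from the virtual-class correction $c_{top}(\bE^\vee\boxtimes T_{II\X})$, with the degree-$(k+1)$ part of $ev_1^*ch(F)$, and reorganising the resulting combination of Chern-class integrals on $II\X$ into the form claimed in (\ref{twisted-inv}). The equivariant version follows by the same computation once an equivariant lift of $F$ is fixed: the étale comparison of moduli stacks and the local identification of $F_{1,1,0}$ used above are preserved by the torus action, so no isolated-fixed-point hypothesis is needed here.
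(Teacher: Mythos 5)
Your proposal follows essentially the same route as the paper's proof: identify $F_{1,1,0}$ as the $\pi_1$-pullback of $(\mathbb{C}-\mathbb{E}^\vee)\boxtimes(\pi_{\X_2}^*F)^{inv}$, observe that the whole integrand is a $\pi_1$-pullback, use $\pi_{1*}[\Mbar_{1,1}(\X,0)']^{vir}=\pi_{2*}\bigl(c_{top}(\mathbb{E}^\vee\boxtimes T_{II\X})\cap([\Mbar_{1,1}]\times[II\X])\bigr)$, and expand in $\psi_1$ using $\psi_1^2=0$. The three local expansions you record, $Td^\vee(L_1)=1-\tfrac12\psi_1$, $\bc((1-\mathbb{E}^\vee)V)=1+\psi_1\sum_j s_j\,ch_{j-1}(V)$, and $c_{top}(\mathbb{E}^\vee\boxtimes T_{II\X})=c_{top}(T_{II\X})-\psi_1\,c_{top-1}(T_{II\X})$, are all correct, and your remark that the equivariant case needs no isolated-fixed-point hypothesis matches the paper's footnote.

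The one place you should not wave your hands is precisely the ``final bookkeeping'' you defer: carrying it out, the $\psi_1$-linear contribution of the virtual-class factor is $-\psi_1\,c_{top-1}(T_{II\X})$, so the third term of the answer is $-\tfrac{1}{24}\int_{II\X}ch_{k+1}(\pi_\X^*F)\,c_{top-1}(T_{II\X})$, whereas the displayed statement has $+\tfrac{1}{24}\int_{II\X}ch_{k+1}(\pi_\X^*F)\,c_{top}(T_{II\X})$. The printed term cannot be right: non-equivariantly it vanishes for degree reasons whenever $k\geq 0$, and it is inconsistent with the $-\tfrac{1}{24}\int_{II\X}\pi_\X^*D\cup c_{top-1}(T_{II\X})$ shape of (\ref{divisor}), which is the special case $ch_{k+1}\leadsto ev_1^*D$. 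So your method yields the correct formula, but you should write out the assembled right-hand side explicitly rather than asserting it coincides with the statement as printed.
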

Here $(\pi_\X^*F)^{inv}$ is the {\em invariant} subbundle of $\pi_\X^*F\to II\X$. A point in $II\X$ is by definition a pair of a point $p\simeq BG\in \X$ and a bi-conjugacy class $(g,h)$ of $G$.  The fiber of $(\pi_\X^*F)^{inv}$ over this point is the subspace $F|_p^{\<g,h\>}\subset F|_p$ invariant under the action of the group $\<g,h\>$.
\begin{proof}
First observe that if $\X$ admits an action by an algebraic torus $T$, then there are natural $T$-actions on $\X_2$ and $II\X$, making the map $\pi_{\X_2}: \X_2\to \X$ in (\ref{bicyclic_inertia_to_stack}) and $\pi_\X: II\X\to \X$ equivariant with respect to $T$-actions. Since the induced $T$-action on $\Mbar_{1,1}(\X,0)'$ is given by post-composing the $T$-action on $\X$ with stable maps (i.e. $T$ does not act on the domain of the stable maps), it follows that the maps $\pi_1$ in (\ref{pi_1_global}) and $\pi_2$ in (\ref{pi_2_global}) are $T$-equivariant. 

Consequently equation (\ref{virtual_class_formula_global}) is also valid $T$-equivarintly.

Working locally on each \'etale chart in (\ref{etale_cover_of_Mbar11}) and apply the description in Section \ref{section:obs_theory}, we find that $$F_{1,1,0}=\pi_1^*((\mathbb{C}-\mathbb{E}^\vee)\boxtimes (\pi_{\X_2}^* F)^{inv}).$$
Here $(\pi_{\X_2}^*F)^{inv}$ is the invariant sub-bundle of $\pi_{\X_2}^*F\to \X_2$. The fiber of $(\pi_{\X_2}^*F)^{inv}$ at a point $(x, H)\in \X_2$ is the subspace $F^H|_x\subset F|_x$ invariant under the action of $H\subset Aut(x)$. It is also easy to see that $$\pi_2^*((\mathbb{C}-\mathbb{E}^\vee)\boxtimes (\pi_{\X_2}^* F)^{inv})=(\mathbb{C}-\mathbb{E}^\vee)\boxtimes (\pi_{\X}^* F)^{inv}.$$

With these observations, the proposition follows easily by applying again the strategy used to verify (\ref{dilaton})-(\ref{divisor}) in previous sections.
\end{proof}

\begin{remark}
\hfill
\begin{enumerate}
\item
In non-equivariant Gromov-Witten theory (\ref{twisted-inv}) also follows from (\ref{dilaton})-(\ref{divisor}) and dimension consideration; see \cite{t}, Lemma 7.2.1.
\item
The occurrence of the invariant subbundle $(\pi_\X^*F)^{inv}$ may be seen via localization formula as follows. Let $p\simeq BG\in\X^T$ and $[f: \C\to \X]\in \Mbar_p$. Then $$F_{1,1,0}|_{[f]}=H^0(\C, f^*F)-H^1(\C, f^*F).$$ 
Over the component $\M(g,h)\subset \Mbar_p$ indexed by the bi-conjugacy class $(g,h)$ one finds 
\begin{equation*}
\begin{split}
&H^0(\C, f^*F)=F|_p^{\<g,h\>},\\
&H^1(\C, f^*F)\simeq H^0(\C, \omega_\C\otimes f^*F^\vee)^\vee \simeq H^0(\C, \omega_\C)^\vee\otimes F|_p^{\<g,h\>}.
\end{split}
\end{equation*}
\end{enumerate}
\end{remark}

\end{document}